\newtheorem{theorem}{Theorem}[section]
\newtheorem{proposition}[theorem]{Proposition}
\newtheorem{corollary}[theorem]{Corollary}
\theoremstyle{definition}
\newtheorem{definition}[theorem]{Definition}
\newtheorem{question}[theorem]{Question}
\theoremstyle{remark}
\newtheorem{remark}{Remark}
\numberwithin{equation}{section}
\newcommand{\R}{\mathbb{R}}
\newcommand{\C}{\mathbb{C}}
\DeclareMathOperator{\im}{Im}
\newcommand{\abs}[1]{\lvert#1\rvert}
\title{On Zero-Sector Reducing Operators}
\author[D.~A.~Cardon]{David~A.~Cardon}
\address{Department of Mathematics, Brigham Young University, Provo, UT 84602, USA}
\email{cardon@math.byu.edu}
\author[T. Forg\'acs]{Tam\'as Forg\'acs}
\address{Department of Mathematics, M/S PB108, Fresno, CA 93740-8001, USA}
\email{tforgacs@csufresno.edu}
\author[A.~Piotrowski]{Andrzej~Piotrowski}
\address{Department of Natural Sciences, M/S SOB1, University of Alaska Southeast, Juneau, AK 99801, USA}
\email{apiotrowski@alaska.edu}
\author[E.~Sorensen]{Evan~Sorensen}
\address{Department of Mathematics, Brigham Young University, Provo, UT 84602, USA}
\email{esorensencapps@gmail.com}
\author[J.~C.~White]{Jason~C.~White}
\address{Department of Mathematics, Brigham Young University, Provo, UT 84602, USA}
\email{white.jason.c@gmail.com}
\begin{document}


\begin{abstract}
We prove a Jensen-disc type theorem for polynomials $p\in\R[z]$ having all their zeros in a sector of the complex plane. This result is then used to prove the existence of a collection of linear operators $T\colon\R[z]\to\R[z]$ which map polynomials with their zeros in a closed convex sector $\abs{\arg z} \leq \theta<\pi/2$ to polynomials with zeros in a smaller sector $\abs{\arg z} \leq \gamma<\theta$. We, therefore, provide the first example of a \textit{zero-sector reducing operator}.
\end{abstract}


\maketitle


\section{Introduction}

For any polynomial $p\in\C[z]$, the classical Gauss-Lucas Theorem states that the zeros of the derivative $p'$ lie inside the closed convex hull of the zeros of $p$. Jensen proved a more precise result in the case where the polynomial $p$ has real coefficients. Jensen's theorem states that all of the non-real zeros of the derivative of a polynomial $p\in\R[x]$ must lie in at least one of the Jensen discs for $p$, where a \textit{Jensen disc} for $p$ is a closed disc whose diameter connects a conjugate pair of non-real zeros of $p$~\cite{J}. 

Either of the results just mentioned demonstrate that the differentiation operator on $\R[x]$ (or $\C[x]$, in the case of the Gauss-Lucas Theorem) maps polynomials with zeros in a strip 
\[
\sigma(A) = \{z \colon \abs{\im z} \leq A\}
\]
to polynomials with zeros in that same strip. Thus, differentiation is an example of a \textit{zero-strip preserving operator}. Bleecker and Csordas use a result of de Bruijn to demonstrate that some differential operators such as $\exp(-\alpha^2 D^2/2)$, 
where $\alpha>0$, map polynomials with zeros in the strip $\sigma(A)$ to a strictly smaller strip $\sigma(A')$, where $A' =\sqrt{\max\{A^2-\alpha^2, 0\}}$ (see \cite[Theorem~3.2]{BC}). Such operators are called \textit{complex zero-strip decreasing operators} and they have been studied in detail by the first author~\cite{Cardon2015}. 

For a closed convex sector 
\[
S(\theta) = \{z \colon \abs{\arg z} \leq \theta \text{ or } z=0 \}\qquad (0\leq \theta<\pi/2),
\]
there are known results which demonstrate the existence of \textit{zero-sector preserving operators} (see, for example,~\cite[Chapter 4]{C}). One of the main results of this paper is to demonstrate the existence of a collection of zero-sector \textit{reducing} operators (Theorem~\ref{ZSRO}). We do so by proving a Jensen disc-type theorem for polynomials with their zeros in a sector (Theorem~\ref{JSD}). 

In the extreme case, the strip degenerates to the real line and the sector degenerates to the non-negative real axis. In~\cite{PS}, P\'olya and Schur characterized all linear operators on $\R[z]$ of the form $T[z^n] = \gamma_n z^n$ which preserve the location of zeros on these limiting sets. They termed the sequence $\{\gamma_k\}_{k=0}^{\infty}$ corresponding to an operator $T$ which maps polynomials with only real zeros to polynomials with only real zeros a \textit{multiplier sequence of the first kind}. Similarly, they termed the sequence $\{\gamma_k\}_{k=0}^{\infty}$ corresponding to an operator $T$ which maps polynomials with only positive real zeros to polynomials with only real zeros a \textit{multiplier sequence of the second kind}. A multiplier sequence of the second kind can be thought of as an operator which maps polynomials with zeros in the sector $S(0)$ to polynomials with zeros in the double sector 
\[
\pm S(0) = \{z \colon z\in S(0) \text{ or } -z\in S(0)\}.
\]
Our results will yield new proofs of some of the classical results. In particular, we will provide a new proof of a result due to Laguerre~\cite{L} which states that the sequence $\{\cos(\lambda+k \theta)\}_{k=0}^{\infty}$, where $\lambda$ and $\theta$ are real, is a multiplier sequence of the second kind (Corollary \ref{Lms2}).

\begin{remark}
Throughout this paper, we will continue to use the notation $\sigma(A)$, $S(\theta)$, and $\pm S(\theta)$ to denote the strip, sector, and double sector, respectively, as defined in this introduction. 
\end{remark}


\section{Some Zero-Sector Reducing Operators}

We next extend the notion of a Jensen disk from the setting of horizontal strips containing the roots of real polynomials to the case in which the roots of real polynomials belong to a sector.
\begin{definition}
Suppose $a$ and $b$ are positive real numbers and $a+ib$ is a zero of $p\in\R[z]$ with $\arg (a+i b)=\theta$. For $0\leq\alpha\leq\pi$ and $\abs{\sec \alpha} < \sec \theta$, we define the \textbf{Jensen sector-disc} corresponding to $a+ib$ and $\alpha$ as the closed disc $\Delta(a,b;\alpha)$ with center $c = (\cos \alpha )(a^2+b^2)/a  = a \cos \alpha \sec^2 \theta$, and radius $r = \sqrt{c^2 - a^2-b^2}.$ In the case $\abs{\sec \alpha}$ is not less than $\sec \theta$, we define $\Delta(a,b; \alpha)=\emptyset$. The Jensen sector-disc is depicted in Figure~\ref{figure:Jensen-sector-disc}.
\end{definition}

\begin{figure}[hbt!] 
\caption{The Jensen sector-disc $\Delta(a,b;\alpha)$}
\label{figure:Jensen-sector-disc}
\includegraphics[width = 4 in]{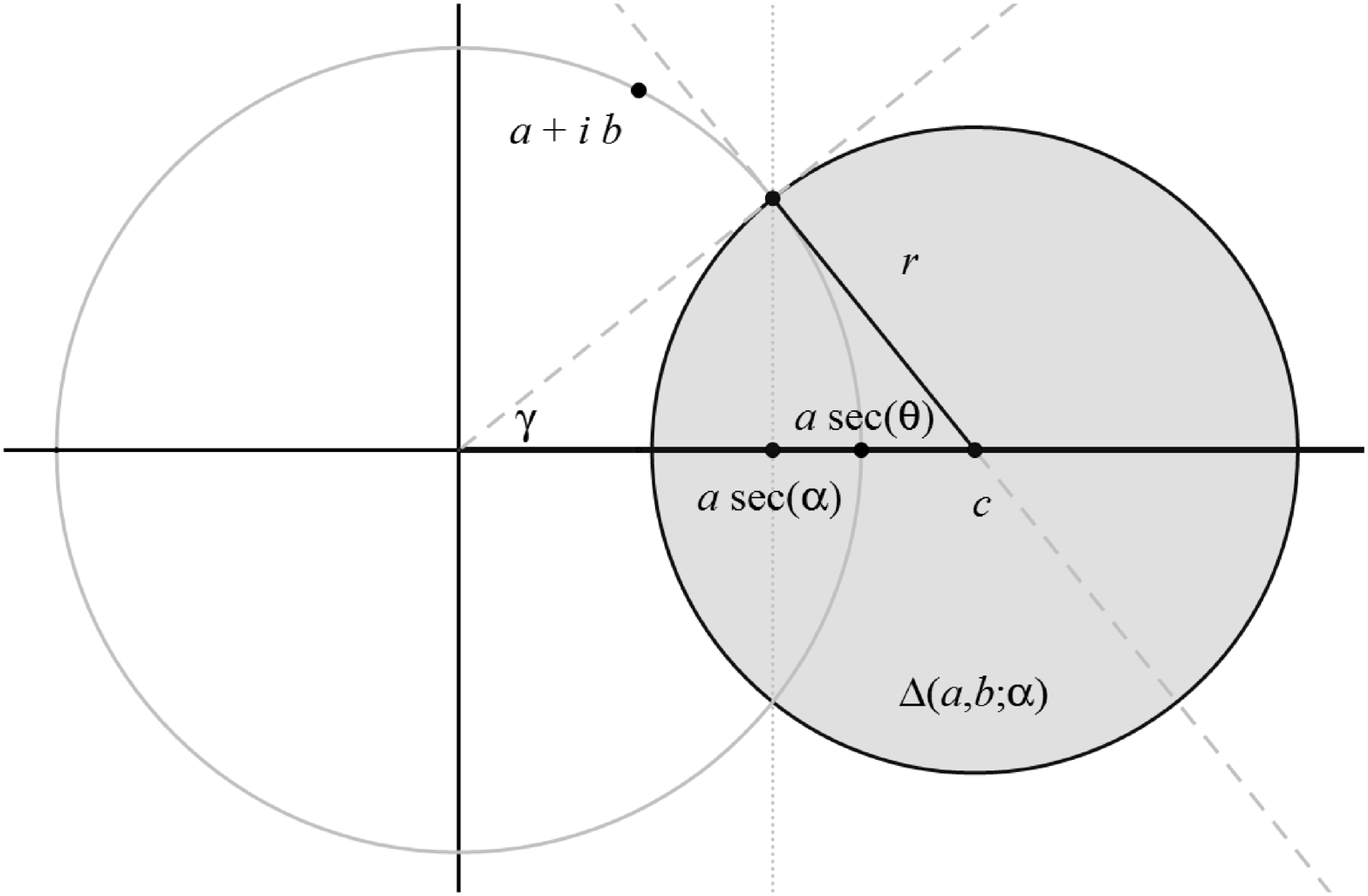}
\end{figure}

\begin{remark} \label{JSDrem}
Geometrically, the Jensen sector-disc $\Delta(a,b; \alpha)$ is the disc which is tangent to the two rays $\arg z=\pm\gamma$, where $\cos\gamma=\cos\theta\sec\alpha$, with the points of tangency lying on the circle $|z|=|a+ib|$ (see  Figure~\ref{figure:Jensen-sector-disc}).
\end{remark}

\begin{theorem}\label{JSD}
Suppose
\[
p(z) = \prod_{k=1}^{m}(z-x_k) \prod_{k=1}^{n} [(z-a_k)^2+b_k^2],
\]
where $x_k \geq 0$, $a_k>0$, and $b_k>0$ for all $k$. For fixed real numbers $\alpha, \beta,$ and $\lambda$ with $0\leq\alpha\leq\pi$,
define 
\[
f(z) = {e^{i \lambda}p(e^{i\alpha} z)+e^{i \beta}p(e^{-i\alpha} z)}.
\]
Then every non-real zero of $f$ must lie in at least one Jensen sector-disc $\Delta(a_k, b_k; \alpha)$. 
\end{theorem}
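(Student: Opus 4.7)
The plan is to show that if $z_0$ is non-real and lies outside every Jensen sector-disc $\Delta(a_k, b_k; \alpha)$, then $f(z_0) \neq 0$. Since $|e^{i\lambda}| = |e^{i\beta}| = 1$, the condition $f(z_0) = 0$ forces $|p(e^{i\alpha}z_0)| = |p(e^{-i\alpha}z_0)|$, so it suffices to prove a strict inequality between these two moduli. By conjugation symmetry it is enough to treat $z_0$ in the open upper half-plane, where I will argue $|p(e^{i\alpha}z_0)| > |p(e^{-i\alpha}z_0)|$. The degenerate values $\alpha \in \{0, \pi\}$ are handled first: there $f$ reduces to a scalar multiple of $p(\pm z)$, and the non-real zeros of $p(\pm z)$ are checked by a direct calculation to lie on the boundary of some $\Delta(a_k, b_k; \alpha)$.

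Assume henceforth $0 < \alpha < \pi$, so $\sin\alpha > 0$. Taking logarithms converts the desired strict inequality into the strict positivity of
\[
\log|p(e^{i\alpha}z)|^2 - \log|p(e^{-i\alpha}z)|^2,
\]
which splits as a sum of contributions, one per real root $x_k \geq 0$ and one per conjugate pair $a_k \pm i b_k$. For a real root $x_k$, a short computation gives
\[
|e^{i\alpha}z - x_k|^2 - |e^{-i\alpha}z - x_k|^2 = 4 x_k \sin\alpha \cdot \im z,
\]
which is non-negative for $\im z > 0$ and strictly positive when $x_k > 0$.

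The crux is the contribution of each conjugate pair. Setting $q_k(w) = (w - a_k)^2 + b_k^2$ and using $\overline{q_k(w)} = q_k(\bar w)$, I would expand
\[
|q_k(e^{i\alpha}z)|^2 - |q_k(e^{-i\alpha}z)|^2 = q_k(e^{i\alpha}z)\, q_k(e^{-i\alpha}\bar z) - q_k(e^{-i\alpha}z)\, q_k(e^{i\alpha}\bar z)
\]
as a polynomial in $z$ and $\bar z$ and collect terms. The expected outcome of this routine but bookkeeping-heavy calculation is
\[
|q_k(e^{i\alpha}z)|^2 - |q_k(e^{-i\alpha}z)|^2 = 8\, a_k \sin\alpha \cdot \im z \cdot \bigl(|z - c_k|^2 - r_k^2 \bigr),
\]
with $c_k = \cos\alpha\, (a_k^2 + b_k^2)/a_k$ and $r_k^2 = c_k^2 - a_k^2 - b_k^2$ exactly the center and squared radius defining $\Delta(a_k, b_k; \alpha)$. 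When $|\sec\alpha| < \sec\theta_k$, where $\theta_k = \arg(a_k + i b_k)$, the bracket is positive precisely for $z \notin \Delta(a_k, b_k; \alpha)$; when $|\sec\alpha| \geq \sec\theta_k$, $r_k^2 \leq 0$ and the bracket is positive for every $z$, consistent with the convention $\Delta(a_k, b_k; \alpha) = \emptyset$. Verifying this factored identity is the main obstacle; once it is in hand, summing all the non-negative contributions (at least one of which is strict, except in the trivial case $p(z) = z^m$) yields the required strict inequality and completes the proof.
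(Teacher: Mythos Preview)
Your proposal is correct and follows essentially the same route as the paper: reduce to $|p(e^{i\alpha}z_0)|\neq|p(e^{-i\alpha}z_0)|$, handle $\alpha\in\{0,\pi\}$ separately, work factor by factor in the upper half-plane, and invoke conjugate symmetry for the lower half-plane. Your factored identity $|q_k(e^{i\alpha}z)|^2-|q_k(e^{-i\alpha}z)|^2=8a_k\sin\alpha\cdot\im z\cdot(|z-c_k|^2-r_k^2)$ is exactly the paper's displayed calculation rewritten in a slightly more transparent form, and your treatment of the $x_k=0$ case is in fact a touch more careful than the paper's.
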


\begin{proof}
If $\alpha  = 0$, then $f(z) = \left(e^{i \lambda}+e^{i \beta}\right)p(z)$ and the theorem follows from the fact that $a\pm ib$ is contained in the Jensen sector-disc $\Delta(a, b, 0)$. Similarly, if $\alpha=\pi$, then $f(z) = \left(e^{i \lambda}+e^{i \beta}\right)p(-z)$ and the theorem follows from the fact that $a\pm ib$ is contained in the Jensen sector-disc $\Delta(a, b, \pi)$.
 
We will now prove the theorem in the case where $0<\alpha<\pi$. Note that if $f(z_0) = 0$, then $\abs{ p(e^{-i\alpha} z_0) } = \abs{ p(e^{i\alpha} z_0) }$. Suppose $z=x+iy$ lies in the upper half-plane, so that $y>0$. Since $y$, $x_k$, and $\sin \alpha$ are all positive, we have   
\[
\sqrt{(x-x_k\cos \alpha)^2+(y-x_k \sin \alpha)^2} < \sqrt{(x-x_k\cos \alpha)^2+(y+x_k \sin \alpha)^2}.
\] 
Thus, for any $z$ in the upper half-plane, the linear factors of $p(e^{\pm i \alpha}z)$ satisfy
\[
\abs{ e^{-i\alpha} z - x_k } <  \abs{ e^{i\alpha} z - x_k }.
\]

For the quadratic factors, we claim that $\abs{ (e^{-i \alpha} z - a_k)^2+b_k^2 } <  \abs{ (e^{i \alpha} z - a_k)^2+b_k^2 }$ if and only if $z\notin \Delta(a_k, b_k; \alpha)$. Indeed, a calculation shows that 
\[
\abs{ (e^{i\alpha} z - a_k)^2+b_k^2 }^2 - \abs{ (e^{-i\alpha} z - a_k)^2+b_k^2 }^2 = 8y \sin \alpha [a_k(a_k^2+b_k^2+x^2+y^2) - 2 x (a_k^2+b_k^2)\cos\alpha ].
\]
Since $a_k$, $y$, and $\sin \alpha$ are all positive, we see that $\abs{(e^{-i \alpha} z - a_k)^2+b_k^2 } < \abs{ (e^{i \alpha} z - a_k)^2+b_k^2 }$ is equivalent to
\[
\cos^2(\alpha) \left(\tfrac{a_k^2+b_k^2}{a_k}\right)^2-a_k^2-b_k^2<\left(x- \cos(\alpha) \tfrac{a_k^2+b_k^2}{a_k}\right)^2 + y^2.
\]
Comparing this with the center and radius of the Jensen sector-disc gives the desired result for points in the upper half-plane.

Finally, a calculation shows that $f(z) = 0$ implies $f(\overline{z}) = 0$ so that the non-real zeros of $f$ come in conjugate pairs, which completes the proof.
\end{proof}

Some remarks are in order:
\begin{enumerate}
\item Theorem~\ref{JSD} remains valid for any real $\alpha$, as long as the appropriate reference angle is used in the Jensen sector-disc. 
\item The polynomial $p$ in Theorem~\ref{JSD} need not have any non-real zeros. In this case, $n=0$ and all of the zeros of $f$ are real.
\item The theorem is sharp in the sense that for quadratic functions $p(z) = (z-a)^2+b^2$, the non-real zeros of the corresponding function $f$ lie on the boundary of $\Delta(a,b;\alpha)$.

\end{enumerate}

As a corollary, we obtain a new proof of a classical result due to Laguerre.

\begin{corollary} \label{Lms2} \emph{(C.f.,~\cite{L})}
For any fixed real numbers $\lambda$ and $\theta$, the sequence $\{\cos(\lambda+ k\theta)\}_{k=0}^{\infty}$ is a multiplier sequence of the second kind.
\end{corollary}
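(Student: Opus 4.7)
The plan is to apply Theorem~\ref{JSD} directly, by recognizing the operator's output as an instance of the function $f$ appearing there. Let $p(z) = \sum_k a_k z^k \in \R[z]$ have only positive real zeros, and let $T$ be defined by $T[z^k] = \cos(\lambda + k\theta) z^k$. Using the identity $\cos(\lambda + k\theta) = \tfrac{1}{2}(e^{i(\lambda+k\theta)} + e^{-i(\lambda+k\theta)})$, I will compute
\[
T[p](z) \;=\; \tfrac{1}{2}\bigl[e^{i\lambda}p(e^{i\theta}z) + e^{-i\lambda}p(e^{-i\theta}z)\bigr],
\]
which, up to the harmless scalar $1/2$, is exactly the function $f$ from Theorem~\ref{JSD} with parameters $\alpha = \theta$ and $\beta = -\lambda$.

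Since $p$ has only positive real zeros, I factor $p(z) = c\prod_{k=1}^m (z - x_k)$ with $x_k > 0$ and reduce to the monic case by linearity of $T$. In the notation of Theorem~\ref{JSD}, this means $p$ has \emph{no} quadratic factors, i.e.\ $n = 0$, so no Jensen sector-discs are associated with $p$. Theorem~\ref{JSD} then forces every non-real zero of $T[p]$ to lie in the empty union of such discs, which is impossible. Consequently $T[p]$ has only real zeros, and by definition $\{\cos(\lambda + k\theta)\}_{k=0}^\infty$ is a multiplier sequence of the second kind.

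The only potential wrinkle is that Theorem~\ref{JSD} is stated for $\alpha \in [0, \pi]$ while the corollary allows arbitrary real $\theta$. I plan to dispatch this either by invoking the first remark following Theorem~\ref{JSD} (which asserts validity for any real $\alpha$ with the appropriate reference angle), or by using the $2\pi$-periodicity of cosine together with the identity $\cos(\lambda + k\theta) = \cos(-\lambda - k\theta)$ to reduce the general $\theta$ to a value in $[0, \pi]$ without changing the sequence. I do not anticipate any substantive obstacle; once the identification with Theorem~\ref{JSD} is made, the result is essentially immediate, the ``work'' having already been done in establishing the Jensen-sector-disc theorem.
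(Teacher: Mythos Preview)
Your proposal is correct and follows essentially the same route as the paper's own proof: express $T[p]$ via Euler's formula as $\tfrac12\bigl[e^{i\lambda}p(e^{i\theta}z)+e^{-i\lambda}p(e^{-i\theta}z)\bigr]$, recognize this as an instance of the function $f$ in Theorem~\ref{JSD}, and observe that when $p$ has only positive real zeros there are no Jensen sector-discs, forcing all zeros of $T[p]$ to be real. Your handling of the range of $\theta$ (via the remark after Theorem~\ref{JSD} or by periodicity and evenness of cosine) is also exactly the right patch.
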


\begin{proof}
By employing the transformation $z\to-z$, if necessary, we may assume that all the zeros of 
\[
p(z) = \sum_{k=0}^{n} c_k z^k \qquad (c_k\in\R)
\]
are positive. By Theorem~\ref{JSD}, the zeros of 
\[
\frac{1}{2}\left[e^{i\lambda} p(e^{-i\theta} z)+e^{-i\lambda} p(e^{i\theta} z) \right] = \sum_{k=0}^{n} \cos(\lambda+k\theta) c_k z^k 
\]
are all real.
\end{proof}

\begin{remark} Laguerre's original proof (see \cite[p.\,204-206]{L} uses the following result due to Hermite: Let $F(x)=F_1(x)+i F_2(x)$, where $F_1,F_2$ are real polynomials. If the imaginary parts of the solutions of $F(x)=0$ are all of the same sign, then the function $\alpha F_1(x)+\beta F_2(x)$ has only real zeros for any choices of real numbers $\alpha, \beta$. \\
If the zeros of a polynomial $p$ are all of the same sign, then $p(xe^{i \omega})$ has zeros whose imaginary parts all have the same sign, and hence by Hermite's theorem (with $\alpha=\cos \lambda, \beta=-\sin \lambda$) the result follows.
\end{remark}
We now present another consequence of Theorem~\ref{JSD} which will be used to prove the existence of an operator which reduces the size of the zero-containing sector for a real polynomial. 
\begin{corollary} \label{cos(ak)}
Suppose all of the zeros of the polynomial
\[
p(z) = \sum_{k=0}^{n} c_k z^k
\]
lie in the sector $S(\theta)$, where $0\leq \theta<\pi/2$. If $0<\alpha<\pi/2$ and if $N$ satisfies $\alpha n/N<\pi/2$, then all of the zeros of the polynomial
\[
q(z) = \sum_{k=0}^{n} c_k \cos(\alpha k/N) z^k
\] 
lie in the sector $S(\gamma)$, where $\gamma = \arccos( \min\{1,  \cos \theta \sec (\alpha/N)\}).$
\end{corollary}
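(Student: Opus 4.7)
The plan is to recognize $q$ as the image of $p$ under the operator $p(z) \mapsto \tfrac{1}{2}\bigl[p(e^{i\alpha/N}z) + p(e^{-i\alpha/N}z)\bigr]$, which sends $z^k$ to $\cos(k\alpha/N)z^k$. Thus
\[
q(z) = \tfrac{1}{2}\bigl[p(e^{i\alpha/N}z) + p(e^{-i\alpha/N}z)\bigr],
\]
and Theorem~\ref{JSD}, applied with $\lambda = \beta = 0$ and $\alpha/N$ in place of $\alpha$, forces every non-real zero of $q$ to lie in some Jensen sector-disc $\Delta(a_k,b_k;\alpha/N)$ attached to a non-real zero $a_k + ib_k$ of $p$.

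I would then unpack these discs geometrically via Remark~\ref{JSDrem}. A non-empty $\Delta(a_k,b_k;\alpha/N)$ is tangent to the rays $\arg z = \pm\gamma_k$ with $\cos\gamma_k = \cos\theta_k\sec(\alpha/N)$, where $\theta_k = \arg(a_k+ib_k)$, and is therefore contained in $S(\gamma_k)$. Since $\theta_k \leq \theta$, one has $\cos\gamma_k \geq \cos\theta\sec(\alpha/N)$, so $\gamma_k \leq \gamma$ whenever $\cos\theta\sec(\alpha/N) \leq 1$. In the complementary case $\cos\theta\sec(\alpha/N) > 1$ (equivalently $\alpha/N > \theta$), the defining inequality $|\sec(\alpha/N)| < \sec\theta_k$ fails for every $k$, so every disc is empty and $q$ has no non-real zeros at all. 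Either way, all non-real zeros of $q$ lie in $S(\gamma)$.

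It remains to show that the real zeros of $q$ are non-negative, so that they belong to $[0,\infty)\subseteq S(\gamma)$. For this I would run a homotopy along the family
\[
q_t(z) = \tfrac{1}{2}\bigl[p(e^{it}z) + p(e^{-it}z)\bigr],\qquad t \in [0,\alpha/N],
\]
interpolating between $q_0 = p$, whose zeros lie in $S(\theta)\subseteq\{\re z \geq 0\}$, and $q_{\alpha/N}=q$. The hypothesis $\alpha n/N<\pi/2$ makes $c_n\cos(nt)$ nonzero on this interval, so the degree of $q_t$ is constant and its zeros depend continuously on $t$. Theorem~\ref{JSD} applied to each $q_t$ traps its non-real zeros inside Jensen discs contained in the open right half-plane (because $\gamma_k(t)<\pi/2$), so no zero can cross the imaginary axis at a nonzero point. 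Writing $p(z)=z^{m_0}\tilde p(z)$ with $\tilde p(0)\neq 0$, a direct computation yields $q_t(z)=z^{m_0}\tilde q_t(z)$ with $\tilde q_t(0)=\tilde p(0)\cos(m_0 t)\neq 0$, so the multiplicity of $0$ as a zero of $q_t$ is constantly $m_0$ and no moving zero can slip across the origin either. Consequently the count of zeros of $q_t$ in $\{\re z<0\}$ is locally constant on $[0,\alpha/N]$, hence equal to its value at $t=0$, namely $0$; in particular, every real zero of $q$ is non-negative.

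The main obstacle is this last step: Theorem~\ref{JSD} is silent about real zeros, and the hypothesis $\alpha n/N<\pi/2$ is doing essential work precisely to keep the homotopy $\{q_t\}$ nondegenerate throughout $[0,\alpha/N]$. Everything else is a direct unpacking of Theorem~\ref{JSD} together with the geometric content of Remark~\ref{JSDrem}.
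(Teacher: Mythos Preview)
Your argument is correct, and the first two thirds---recognizing $q$ as $\tfrac12\bigl[p(e^{i\alpha/N}z)+p(e^{-i\alpha/N}z)\bigr]$, invoking Theorem~\ref{JSD}, and reading off the sector $S(\gamma)$ from Remark~\ref{JSDrem}---match the paper's proof exactly. The divergence is in how you handle the real zeros. The paper dispatches this step in two lines: since the zeros of $p$ lie in the open right half-plane (together with possibly the origin), the coefficients $c_k$ alternate in sign; the hypothesis $\alpha n/N<\pi/2$ makes every $\cos(\alpha k/N)$ positive, so the coefficients of $q$ alternate as well, and $q(-x)$ is a sum of terms of a single sign for $x>0$. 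Your homotopy through $q_t$ also works and uses the hypothesis $\alpha n/N<\pi/2$ in a structurally different way (to keep the leading coefficient and the value $\tilde q_t(0)$ nonvanishing so that no zero can slip across the imaginary axis), but it is considerably heavier machinery for what the sign-alternation argument does in one stroke. The homotopy viewpoint might generalize better to settings where coefficient signs are not transparent; here, though, the paper's route is the cleaner one.
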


\begin{proof}
Since
\[
q(z) = \frac{1}{2}\left[p(e^{-i\alpha/N} z)+ p(e^{i\alpha/N} z) \right], 
\]
Theorem \ref{JSD} guarantees that all of the non-real zeros of $q$ lie in the union of the Jensen-sector discs associated with any non-real zeros of $p$. In light of Remark \ref{JSDrem}, all of the non-real zeros of $q$ must lie in the sector $S(\gamma)$. 

To finish the proof, we need to show that none of the real zeros of $q$ is negative. Since the zeros of $p$ lie in the right half-plane, its coefficients must alternate in sign (see, for example,~\cite[Prop.~11.4.2]{RS}). Since all of the quantities   
\[
\cos(\alpha k/N), \qquad k=0, 1, 2, \dots, n
\]
are positive the coefficients of $q$ must also alternate in sign. Therefore, none of the real zeros of $q$ can be negative. 
\end{proof}

We now establish the existence of a collection of \textit{zero-sector reducing operators}.

\begin{theorem}\label{ZSRO}
Let
\[
p(z) = c_{m+2n}\prod_{k=1}^{m}(z-x_k) \prod_{k=1}^{n} [(z-a_k)^2+b_k^2] = \sum_{k=0}^{m+2n} c_k z^k,
\]
where $x_k \geq 0$, $a_k>0$, and $b_k>0$ for all $k$. If all the zeros of $p$ lie in the closed sector $S(\theta)$, where $0\leq \theta<\pi/2$, and if $\alpha\in\R$, then all of the zeros of
\[
T[p(z)] =  \sum_{k=0}^{m+2n} \exp\left(\frac{-\alpha^2 k^2}{2}\right) c_k z^k
\]
lie in the sector $S(\gamma)$, where $\gamma = \arccos\left(\min\left\{1, e^{\alpha^2/2} \cos\theta\right\}\right).$
\end{theorem}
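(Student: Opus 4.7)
The plan is to realize $T$ as a limit of the zero-sector reducing operators already produced by Corollary~\ref{cos(ak)}, and then pass to the limit via the continuity of polynomial roots. The analytic engine is the classical identity
\[
\lim_{N\to\infty}\cos^{N^2}\!\bigl(\alpha k/N\bigr) = \exp\!\bigl(-\alpha^2 k^2/2\bigr)\qquad(k\in\Z_{\geq 0}),
\]
which follows from the expansion $N^2\log\sec(\alpha/N) = \alpha^2/2 + O(1/N^2)$. Since $T$ depends on $\alpha$ only through $\alpha^2$, and since $\alpha = 0$ gives $T=\mathrm{id}$, we may assume $\alpha > 0$.

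For each integer $N$ with $\alpha(m+2n)/N < \pi/2$, I would define iteratively $p_0 = p$ and
\[
p_{j+1}(z) = \tfrac{1}{2}\bigl[p_j(e^{-i\alpha/N}z)+p_j(e^{i\alpha/N}z)\bigr],
\]
so that $p_{j+1}$ is obtained from $p_j$ by multiplying the coefficient of $z^k$ by $\cos(\alpha k/N)$. A straightforward induction on $j$, invoking Corollary~\ref{cos(ak)} at each step (with parameter $\alpha/N$), shows that every zero of $p_j$ lies in the sector $S(\gamma_j^{(N)})$, where
\[
\cos\gamma_j^{(N)} = \min\bigl\{1,\,\cos\theta\cdot \sec^j(\alpha/N)\bigr\}.
\]
The induction step still goes through when $\gamma_{j-1}^{(N)} = 0$: the iterate then has only nonnegative real zeros, and the corollary applied with initial angle $0$ forces $\gamma_j^{(N)}=0$, consistent with the displayed formula since $\sec^j(\alpha/N) > 1$. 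Taking $j = N^2$ yields
\[
p_{N^2}(z) = \sum_{k=0}^{m+2n} c_k\cos^{N^2}\!(\alpha k/N)\,z^k,
\]
whose zeros all lie in $S(\gamma_{N^2}^{(N)})$.

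Now I let $N\to\infty$. Coefficientwise, $p_{N^2}\to T[p]$, and because the leading coefficient $c_{m+2n}\exp(-\alpha^2(m+2n)^2/2)$ of $T[p]$ is nonzero, the degree is preserved in the limit. A short Taylor estimate yields $\sec^{N^2}(\alpha/N) \to e^{\alpha^2/2}$, whence
\[
\cos\gamma_{N^2}^{(N)} \longrightarrow \min\bigl\{1,\,\cos\theta\cdot e^{\alpha^2/2}\bigr\} = \cos\gamma.
\]
Since every zero of $p_{N^2}$ lies in $S(\gamma_{N^2}^{(N)})$, Hurwitz's theorem (equivalently, root continuity for polynomials of fixed degree) forces every zero of $T[p]$ to lie in the closed sector $S(\gamma)$.

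The main obstacle, as I see it, is the bookkeeping in the iterated application of Corollary~\ref{cos(ak)}: verifying that the Jensen sector-disc bound $\gamma_j^{(N)}$ really evolves according to the clean closed-form formula above across all $N^2$ iterations, including the degenerate case where the iterate has already collapsed onto the nonnegative real axis. Once this inductive picture and the asymptotic $\sec^{N^2}(\alpha/N)\to e^{\alpha^2/2}$ are in place, the final passage to $T[p]$ is a standard root-continuity argument.
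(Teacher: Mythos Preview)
Your proposal is correct and follows essentially the same route as the paper: iterate Corollary~\ref{cos(ak)} with parameter $\alpha/N$ a total of $N^2$ times, obtain the polynomial $\sum c_k\cos^{N^2}(\alpha k/N)z^k$ with zeros in the sector governed by $\cos\theta\cdot\sec^{N^2}(\alpha/N)$, and pass to the limit using $\sec^{N^2}(\alpha/N)\to e^{\alpha^2/2}$. You are somewhat more careful than the paper in flagging the degenerate case $\gamma_j^{(N)}=0$ and in invoking Hurwitz explicitly for the limiting step, but the argument is the same.
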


\begin{proof}
We will obtain the exponential sequence as a limit of a cosine sequence via the limit
 
\[
\lim_{N\to \infty} [\cos (\alpha /N)]^{N^2}  = e^{-\alpha^2/2}.
\]
For all $N$ sufficiently large, we may apply Corollary~\ref{cos(ak)} a total of $N^2$ times, and each application reduces the sector half-angle by a factor of $\sec(\alpha/N)$. This results in the polynomial 
\[
p_N(z) = \sum_{k=0}^{m+2n} c_k \left[\cos(\alpha k/N)\right]^{N^2} z^k,
\]
whose zeros must lie in the sector $S(\gamma_N)$, where $\gamma_N$ satisfies
\[
\gamma_N = \arccos(\min\{1, \cos \theta \left[\sec(\alpha/N)\right]^{N^2-1}\}).
\] 
Taking the limit as $N\to \infty$, we see that the zeros of 
\[
T[p(z)] =  \sum_{k=0}^{m+2n}  \exp\left(\frac{-\alpha^2 k^2}{2}\right) c_k z^k
\]
must lie in the sector $S(\gamma)$, where $\gamma$ satisfies
\[
\gamma = \arccos(\min\{1, e^{\alpha^2/2}\cos \theta \}) .
\]
The result follows.
\end{proof}

We note here that the result in Theorem~\ref{ZSRO} is sharp, with quadratic polynomials having their zeros on the boundary of the reduced sector. Furthermore, the example $p(z) = z^2+1$ demonstrates that the theorem cannot be extended to include the case where the sector is the closed half-plane ($\theta = \pi/2$). 

To close this section, we extend the foregoing results to certain transcendental entire functions. The result follows directly from Hurwitz' theorem (\cite[Theorem
~6.2.6]{MH}).
\begin{corollary}
If 
\[
f(z) = \sum_{k=0}^{\infty} c_k z^k
\]
is the uniform limit on compact subsets of $\C$ of a sequence of real polynomials $\{p_n\}$ each of which has zeros in the sector $S(\theta)$ for some $\theta<\pi/2$, then
all of the zeros of 
\[
T[f(z)] =  \sum_{k=0}^{\infty}  \exp\left(\frac{-\alpha^2 k^2}{2}\right) c_k z^k
\]  
lie in the sector $S(\gamma)$, where $\gamma = \arccos\left(\min\left\{1, e^{\alpha^2/2} \cos\theta\right\}\right).$
\end{corollary}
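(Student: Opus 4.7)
The plan is to apply Theorem~\ref{ZSRO} to each approximating polynomial $p_n$ and then pass to the limit using Hurwitz's theorem. Since each $p_n$ has real coefficients with all zeros in $S(\theta)$ and $\theta<\pi/2$, the non-real zeros come in conjugate pairs with positive real part, so $p_n$ admits the factorization required in Theorem~\ref{ZSRO}. Consequently every $T[p_n]$ has all of its zeros in the same sector $S(\gamma)$, with $\gamma=\arccos(\min\{1,\,e^{\alpha^2/2}\cos\theta\})$ independent of $n$.

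The critical step is to show that $T[p_n]\to T[f]$ uniformly on compact subsets of $\C$. My approach is to use the integral representation
\[
T[g](z)=\frac{1}{\sqrt{2\pi}}\int_{-\infty}^{\infty} g(e^{i\alpha t}z)\, e^{-t^2/2}\, dt,
\]
which follows term-by-term from the Fourier identity $\frac{1}{\sqrt{2\pi}}\int_{-\infty}^{\infty} e^{-t^2/2} e^{i\alpha k t}\, dt = e^{-\alpha^2 k^2/2}$. Because the rotation $z\mapsto e^{i\alpha t}z$ preserves the closed disc $\{|z|\leq R\}$ and $p_n\to f$ uniformly there, the difference $[p_n(e^{i\alpha t}z)-f(e^{i\alpha t}z)]\,e^{-t^2/2}$ is dominated by $\varepsilon_n(R)\,e^{-t^2/2}$ with $\varepsilon_n(R)\to 0$. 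Dominated convergence then yields $\sup_{|z|\leq R}|T[p_n](z)-T[f](z)|\to 0$, and in particular shows that $T[f]$ is entire.

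With this convergence in hand, I apply Hurwitz's theorem on the connected open set $\C\setminus S(\gamma)$, which is connected because $S(\gamma)$ is a proper closed convex wedge with $\gamma<\pi/2$. Since each $T[p_n]$ is zero-free there, Hurwitz's theorem implies that $T[f]$ is either identically zero (in which case the conclusion is vacuous) or is itself zero-free on $\C\setminus S(\gamma)$, which is precisely the claim.

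The only genuine obstacle is the uniform convergence $T[p_n]\to T[f]$ on compacta; the integral representation dispatches it cleanly. As an alternative, one could instead work directly with the coefficients: Cauchy's estimates applied to $p_n-f$ on discs of radius $R$ give uniform coefficient bounds $|c_k^{(n)}-c_k|\leq M_n(R)/R^k$ with $M_n(R)\to 0$, and the Gaussian decay of $\exp(-\alpha^2 k^2/2)$ overwhelms any finite radius, yielding uniform convergence on compacta. Either route completes the proof.
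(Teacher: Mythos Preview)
Your proof is correct and follows the same route as the paper, which simply asserts that the corollary ``follows directly from Hurwitz' theorem'' without spelling out the convergence $T[p_n]\to T[f]$ on compacta. Your integral representation (and the Cauchy-estimate alternative) supplies exactly that missing verification; note that in the alternative argument the bound $e^{-\alpha^2 k^2/2}\le 1$ already suffices, so the Gaussian decay is not actually needed to control the tail.
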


\section{Guiding Principles}
We now give some insight as to why we chose to investigate the operator $T$ appearing in Theorem $\ref{ZSRO}$. The heuristic property that guided our investigation is as follows. Suppose 
\[
p(z) = \sum_{k=0}^{n} a_k z^k = a_n\prod_{k=1}^{n}(z-z_k)
\]
has all of its zeros in the sector $S(\theta)$ with $\theta<\pi/2$. Then the zeros of $p(e^z)$ are given by $\log(z_k) = \ln\abs{z_k} + i\arg(z_k)$. The zeros corresponding to principle values of the logarithm all lie in the principle strip $\sigma(\theta)$ and the remaining zeros are identical copies located in strips at a vertical distance of integer multiples of $2\pi k$. The operator $\exp(-\alpha^2 D^2/2)$ acts on $e^{kz}$ as follows:
\[
e^{-\alpha^2 D^2/2} e^{k z} = \sum_{j=0}^{\infty} \frac{(-\alpha^2/2)^j}{j!} \frac{d^{2j}}{dz^{2j}} e^{kz} = e^{-\alpha^2 k^2/2}e^{kz}.
\]
Since the the operator $\exp(-\alpha^2 D^2/2)$ is a complex zero strip decreasing operator for all $\alpha>0$, it stands to reason that the zeros of 
\[
e^{-\alpha^2 D^2/2} p(e^z) = \sum_{k=0}^{n} a_k e^{-\alpha^2 k^2/2}e^{kz}
\] 
would have all its zeros in a collection of narrower 
strips. Thus, all of the zeros of $T[p]$ should have all of its zeros in a smaller sector $S(\gamma)$, where $\gamma<\theta$. The main obstacle in this argument is that the zeros of $p(e^z)$ do not lie in a \textit{single} strip in the complex plane. Thus, the complex zero strip decreasing operator $\exp(-\alpha^2 D^2/2)$ is not, {\it à priori}, guaranteed to reduce the size of each of the strips. However, with Theorem \ref{ZSRO} at our disposal, we obtain this fact as a corollary. 

\begin{corollary} \label{PeriodStrip}
For any $\alpha>0$, the infinite order differential operator $\exp(-\alpha^2 D^2/2)$  
is a periodic-strip decreasing operator on the space of exponential polynomials. That is to say, if all of the principle zeros of the exponential polynomial
\[
p(z) = \sum_{k=0}^{n} c_k e^{kz} \qquad (c_k\in\R)
\]
lie in the principle strip $\sigma(A)$, then the principle zeros of
\[
\exp(-\alpha^2 D^2/2) p(z) = \sum_{k=0}^{n} c_k e^{-\alpha^2 k^2/2}e^{kz}
\]
all lie in a smaller strip $\sigma(A^*)$, where $A^*=\arccos(\min\{1, e^{\alpha^2/2}\cos A \})$. 
\end{corollary}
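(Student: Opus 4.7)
The plan is to reduce Corollary~\ref{PeriodStrip} to Theorem~\ref{ZSRO} by means of the substitution $w = e^{z}$, exactly as hinted in the preceding discussion of guiding principles. Let $P(w) = \sum_{k=0}^{n} c_k w^{k} \in \R[w]$, so that $p(z) = P(e^{z})$. Under the principal branch of the logarithm, the map $z \mapsto e^{z}$ is a bijection between the principal strip $\sigma(A)$ and the sector $S(A)\setminus\{0\}$ (assuming $A < \pi/2$, which is forced by the requirement that the stated $A^{*}$ satisfy $A^{*} < A$). Consequently, the hypothesis that the principal zeros of $p$ lie in $\sigma(A)$ is equivalent to the statement that every nonzero root of $P$ lies in the sector $S(A)$.

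Next I would compute how the differential operator acts on $p$. Since $\exp(-\alpha^{2}D^{2}/2)$ multiplies $e^{k z}$ by $e^{-\alpha^{2}k^{2}/2}$, we get
\[
\exp(-\alpha^{2}D^{2}/2)\,p(z) \;=\; \sum_{k=0}^{n} c_{k}\, e^{-\alpha^{2}k^{2}/2}\, e^{k z} \;=\; Q(e^{z}),
\]
where $Q(w) = \sum_{k=0}^{n} c_{k}\, e^{-\alpha^{2}k^{2}/2}\, w^{k}$ is precisely the polynomial $T[P](w)$ for the operator $T$ of Theorem~\ref{ZSRO}. Theorem~\ref{ZSRO} then asserts that every zero of $Q$ lies in $S(\gamma)$ with $\gamma = \arccos(\min\{1, e^{\alpha^{2}/2}\cos A\}) = A^{*}$.

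Pulling back through $w = e^{z}$ completes the argument: any principal zero $z_{0}$ of $\exp(-\alpha^{2}D^{2}/2)p$ satisfies $Q(e^{z_{0}}) = 0$, and since $\lvert \im z_{0}\rvert = \lvert \arg e^{z_{0}}\rvert$ for principal zeros, the containment $e^{z_{0}} \in S(A^{*})$ translates to $z_{0} \in \sigma(A^{*})$.

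The only mild obstacle is checking that $P$ genuinely fits the hypotheses of Theorem~\ref{ZSRO}: it must have real coefficients (given), its zeros must lie in the closed sector $S(A)$ with $A < \pi/2$ (just established, noting that $w = 0$ is never in the image of the exponential so we may factor out any power of $w$ coming from a vanishing constant term without loss), and we must have $A < \pi/2$ so that the conclusion of Theorem~\ref{ZSRO} is applicable in the strict form. All of these are automatic under the stated hypothesis, so the argument is essentially a dictionary between the exponential-polynomial and polynomial viewpoints, with Theorem~\ref{ZSRO} doing the real work.
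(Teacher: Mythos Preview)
Your proposal is correct and follows precisely the approach the paper intends: the corollary is stated immediately after the heuristic discussion of the substitution $w=e^{z}$, and the paper offers no separate proof beyond the remark that ``with Theorem~\ref{ZSRO} at our disposal, we obtain this fact as a corollary.'' Your write-up simply makes that dictionary explicit, checking that the principal-strip condition on $p(z)=P(e^{z})$ translates to the sector condition on $P$, that $\exp(-\alpha^{2}D^{2}/2)$ acts on $p$ as the multiplier operator $T$ acts on $P$, and that the conclusion of Theorem~\ref{ZSRO} pulls back to the desired strip via $\lvert\im z_{0}\rvert=\lvert\arg e^{z_{0}}\rvert$.
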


We conclude this section by noting that the reduction of the strip size guaranteed by Corollary \ref{PeriodStrip} is somewhat less than what one would obtain from naively applying the theorem of Bleecker and Csordas alluded to in the introduction (see~\cite[Theorem~3.2]{BC}) to the exponential polynomial $p(e^z)$. In this case, one would obtain the strip $\sigma(A')$ where $A' = \sqrt{\max\{A^2-\alpha^2, 0\}}$. This demonstrates that zero-strip reducing properties on $\R[z]$ cannot be extended verbatim to principle zero-strip reducing properties on $\R[e^z]$. 


\section{Necessary Conditions}
In this section, we determine some necessary conditions that a sequence $\{\gamma_k\}_{k=0}^{\infty}$ must satisfy in order for the corresponding linear operator defined by $T[z^n] = \gamma_n z^n$ to be a zero-sector reducing operator. 


\subsection{Connection to Zero-Sector Preservers and Multiplier Sequences}

In order for an operator on $\R[z]$ to be a zero-sector reducing operator, it must be a zero-sector \textit{preserving} operator. The converse is not necessarily true. For example, the differentiation operator is a zero-sector preserver (a consequence of the Gauss-Lucas Theorem), but it will not reduce the size of the zero-sector when it acts on a polynomial with multiple zeros on the boundary of the sector. A good overview of zero-sector preservers is given in~\cite[Chapter 4]{C}. 

In order to connect zero-sector preserving operators with multiplier sequences, we recall the algebraic characterization of multiplier sequences of the first kind.
\begin{theorem} \emph{(Algebraic Characterization of Multiplier Sequences of the First Kind, ~\cite{PS})}
A sequence $\{\gamma_k\}_{k=0}^{\infty}$ of non-negative real numbers is a multiplier sequence of the first kind if and only if 
\[
T[(1+z)^n] = \sum_{k=0}^{n} \binom{n}{k} \gamma_k z^k
\]
has only real zeros for all positive integers $n$.
\end{theorem}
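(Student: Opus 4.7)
The proof splits into two directions. For the forward implication, observe that $(1+z)^n$ has only the real zero $z=-1$ (of multiplicity $n$), so if $\{\gamma_k\}_{k=0}^\infty$ is a multiplier sequence of the first kind, then by definition $T[(1+z)^n] = \sum_{k=0}^n \binom{n}{k}\gamma_k z^k$ must have only real zeros. This direction is immediate and uses nothing beyond the definition.

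For the reverse implication, assume $T[(1+z)^n]$ is real-rooted for every $n \geq 1$. Since the $\gamma_k$ are non-negative, all zeros of $T[(1+z)^n]$ are in fact non-positive. The plan is to reduce an arbitrary real-rooted polynomial to a product of linear factors and then lift the hypothesis via a polarization argument. First, for any real $c$,
\[
T[(1+cz)^n](z) \;=\; \sum_{k=0}^n \binom{n}{k}\gamma_k c^k z^k \;=\; \bigl(T[(1+z)^n]\bigr)(cz),
\]
so $T[(1+cz)^n]$ has only real zeros as well. The key step is to upgrade this diagonal case to arbitrary products: one must show that for all real $c_1,\ldots,c_n$, the polynomial $T\bigl[\prod_{j=1}^n(1+c_jz)\bigr]$ has only real zeros. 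Since this expression is a symmetric, multilinear function of $(c_1,\ldots,c_n)$ whose diagonal $c_1=\cdots=c_n=c$ is real-rooted in $z$ for every $c\in\R$, the Grace-Walsh-Szeg\H{o} coincidence theorem (or equivalently the Schur-Szeg\H{o} composition theorem) transfers real-rootedness from the diagonal to the fully polarized object.

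Given an arbitrary real-rooted polynomial $p(z) = a_n\prod_{j=1}^n(z-r_j) \in \R[z]$ with no $r_j=0$, rewrite $p(z) = a_n\prod_j(-r_j)\prod_j(1-z/r_j)$ and apply the previous claim with $c_j=-1/r_j$ to conclude that $T[p]$ is real-rooted; roots equal to zero are recovered via a continuity/Hurwitz argument after perturbing $r_j \to r_j + \varepsilon$. Alternatively, one can shortcut the reverse direction by recognizing $T[p]$ directly as a Schur-Szeg\H{o} composition of $T[(1+z)^n]$ (real-rooted with only non-positive zeros) and $p$ (real-rooted), which yields the conclusion in a single line. The main obstacle in either approach is the same: establishing the requisite composition/polarization theorem for real-rooted polynomials, which is the deepest classical input here and is not directly supplied by Theorem~\ref{JSD} or its corollaries.
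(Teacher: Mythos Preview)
The paper does not supply a proof of this theorem; it is stated as a classical result of P\'olya and Schur, cited from~\cite{PS}, and used as a black box. There is therefore no proof in the paper to compare your proposal against.

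That said, your outline is correct and follows the standard classical route. The forward direction is indeed trivial from the definition. For the reverse direction, your second observation---that $T[p]$ is precisely the Schur--Szeg\H{o} composition of $p$ with $T[(1+z)^n]$, the latter having all its zeros real and of one sign (non-positive, since the $\gamma_k$ are non-negative)---is exactly the argument P\'olya and Schur gave, and it disposes of the claim in one stroke once the Schur--Szeg\H{o} composition theorem is available. Your first route via Grace--Walsh--Szeg\H{o} polarization is morally equivalent (the two results are essentially the same theorem in different guises), though your phrasing of the polarization step is loose: Grace--Walsh--Szeg\H{o} is usually stated as a coincidence theorem for symmetric multiaffine polynomials nonvanishing on a circular domain, not directly as ``real-rooted on the diagonal implies real-rooted off the diagonal,'' so a little more care would be needed there. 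In either formulation you have correctly identified the single deep classical input required for the converse; none of the paper's own results (Theorem~\ref{JSD} and its corollaries) plays any role.
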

This is a complete characterization as a multiplier sequence of the first kind must either have constant sign, or must alternate in sign (see~\cite[p. 341]{Levin}). We now re-cast part of this theorem in the language of zero-sector preservers.  

\begin{proposition}\label{RoMS}
Let $T$ be the linear operator on $\R[z]$ defined by $T[z^n] = \gamma_n z^n$, where $\{\gamma_n\}_{n=0}^{\infty}$ is a sequence of real numbers. If $T$ is a zero-sector preserver for the sector $S(0)$, then the sequence $\{\gamma_n\}_{n=0}^{\infty}$ is a multiplier sequence of the first kind.
\end{proposition}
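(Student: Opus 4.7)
The plan is to invoke the algebraic characterization of multiplier sequences of the first kind stated immediately above, which reduces the task to showing that $T[(1+z)^n]$ has only real zeros for every $n\geq 1$. The polynomial $(1+z)^n$ has its only zero at $z=-1\notin S(0)$, so the hypothesis on $T$ does not apply directly. The key trick is to work instead with $(z-1)^n$, whose single zero at $z=1$ \emph{does} lie in $S(0)$, and then to transfer the information by an elementary change of variable.

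Concretely, fix $n\geq 1$. Since $(z-1)^n$ has all of its zeros in $S(0)$, the preservation hypothesis yields that
\[
T[(z-1)^n] \;=\; \sum_{k=0}^{n}\binom{n}{k}(-1)^{n-k}\gamma_k\, z^k
\]
has only non-negative real zeros. The substitution $z \mapsto -z$ gives the polynomial identity
\[
T[(z-1)^n]\big|_{z\mapsto -z} \;=\; (-1)^n\, T[(1+z)^n],
\]
which turns non-negative real zeros into non-positive real zeros. Hence $T[(1+z)^n]$ has only real zeros for every $n\geq 1$.

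Since the quoted algebraic characterization is stated for non-negative sequences, I still need to handle sign. Because each $T[(1+z)^n]$ factors (up to the leading constant $\gamma_n$) as $\prod_i(z+|w_i|)$ with $|w_i|\geq 0$, all of its coefficients $\binom{n}{k}\gamma_k$ share the sign of $\gamma_n$. Letting $n$ vary, the non-zero entries of $\{\gamma_k\}$ must have a common sign; after replacing $\{\gamma_k\}$ by $\{-\gamma_k\}$ if necessary (which does not affect membership in the class of first-kind multiplier sequences) I may assume $\gamma_k\geq 0$, and the characterization theorem then delivers the conclusion.

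The only substantive step, beyond the bookkeeping above, is recognizing that the one-line substitution $z\mapsto -z$ bridges the gap between the preservation hypothesis applied to $(z-1)^n$ and the P\'olya--Schur test polynomial $(1+z)^n$; I expect this to be the main obstacle for a reader who has not seen the trick, since the characterization theorem is phrased in terms of a polynomial family that is not itself eligible input for the hypothesis.
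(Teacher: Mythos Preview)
Your proof is correct and follows essentially the same route as the paper: apply the preservation hypothesis to the test polynomial with its sole zero at $z=1$ (the paper uses $(1-z)^n$, you use $(z-1)^n=(-1)^n(1-z)^n$), perform the substitution $z\mapsto -z$ to pass to $T[(1+z)^n]$, read off the constant-sign condition on the $\gamma_k$, and invoke the P\'olya--Schur algebraic characterization. Your write-up is somewhat more explicit about the sign bookkeeping than the paper's, but the argument is the same.
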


\begin{proof}
Suppose $T$ is a zero-sector preserver for $S(0)$. Then, for every natural number $n$, the zeros of 
\[
T[(1-z)^n] = \sum_{k=0}^{n} \binom{n}{k} \gamma_k (-z)^k
\] 
are all real and positive. From this, we conclude that elements of the sequence $\{\gamma_n\}_{n=0}^{\infty}$ must all be the same sign and that $T[(1+z)^n]$ has only real zeros for all $n$. Therefore, the result follows from the algebraic characterization of multiplier sequences of the first kind.
\end{proof}

Combining the preceding result with Theorem \ref{ZSRO}, we obtain a new proof of another classical result due to Laguerre.

\begin{corollary} \emph{(C.f.,~\cite{L})}
If $q$ satisfies $-1<q<1$, then the linear operator $T \colon \R[z]\to\R[z]$ defined by $T[z^n] = q^{n^2} z^n$ is a zero-sector reducing operator. Thus, the sequence $\{q^{k^2}\}_{k=0}^{\infty}$ is a multiplier sequence of the first kind. 
\end{corollary}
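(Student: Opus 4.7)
The strategy is to realize the sequence $\{q^{k^2}\}$ as an instance of the Gaussian weight appearing in Theorem~\ref{ZSRO}, deduce that $T$ reduces sectors, and then transfer to multiplier sequences via Proposition~\ref{RoMS}.

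For $0<q<1$, the plan is to set $\alpha=\sqrt{-2\ln q}$, so that $q=e^{-\alpha^2/2}$ and $q^{n^2}=e^{-\alpha^2 n^2/2}$. Then $T$ coincides exactly with the operator of Theorem~\ref{ZSRO}. Given any $\theta\in[0,\pi/2)$ and any $p\in\R[z]$ with all zeros in $S(\theta)$, that theorem produces $T[p]$ with zeros in $S(\gamma)$, where $\gamma=\arccos(\min\{1,\cos\theta/q\})$. Since $\cos\theta/q>\cos\theta$ for $q<1$, we obtain $\gamma<\theta$ whenever $\theta>0$, so $T$ is a zero-sector reducing operator. Specializing to $\theta=0$ shows moreover that $T$ is a zero-sector preserver for $S(0)$, and Proposition~\ref{RoMS} then yields that $\{q^{k^2}\}$ is a multiplier sequence of the first kind.

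The case $q=0$ is immediate. For $-1<q<0$, I would write $q=-|q|$ and exploit the parity identity $n^2\equiv n\pmod 2$, which gives $q^{n^2}=(-1)^n|q|^{n^2}$. Letting $U$ denote the operator $U[z^n]=|q|^{n^2}z^n$ (already handled by the positive case), a direct calculation shows $T[p](z)=U[p](-z)$. Since $U$ preserves the real-zero property and so does the substitution $z\mapsto -z$, the composition $T$ does as well, so $\{q^{k^2}\}$ remains a multiplier sequence of the first kind. The corresponding sector statement reads ``$T$ maps $S(\theta)$ into $-S(\gamma)$,'' which is the natural interpretation of the zero-sector reducing property when the multiplier coefficients alternate in sign.

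The main obstacle is purely bookkeeping: identifying the correct parametrization $q=e^{-\alpha^2/2}$ so that Theorem~\ref{ZSRO} applies verbatim, and then handling the sign flip for negative $q$ via the parity identity $n^2\equiv n\pmod 2$. No new analytic content is needed beyond what is already packaged in Theorem~\ref{ZSRO} and Proposition~\ref{RoMS}.
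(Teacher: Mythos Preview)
Your proposal is correct and follows the paper's own approach: set $-\alpha^2/2=\ln q$ so that Theorem~\ref{ZSRO} applies verbatim, then invoke Proposition~\ref{RoMS} with $\theta=0$ to obtain the multiplier-sequence conclusion. You go a bit further than the paper's one-line proof by explicitly treating $q=0$ and $-1<q<0$ via the parity identity $n^2\equiv n\pmod 2$, which the paper leaves implicit.
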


\begin{proof}
Apply Theorem \ref{ZSRO} (with $-\alpha^2/2 = \ln q$ and $\theta=0$) and Proposition \ref{RoMS}.
\end{proof}


\subsection{Analysis of Three-Term Polynomials}
Consider the polynomial
\[
p(x) = x^{n+2}- b x^{n+1} + c x^n \qquad(b>0)
\]
with two zeros in the right-half plane and a multiple zero at the origin. Its nonzero zeros are
\[
\frac{b\pm \sqrt{b^2 - 4c}}{2}.
\]
Transforming the polynomial by the linear operator defined by $T[z^k]= \gamma_k z^k$ gives a polynomial with multiple zeros at the origin and two additional zeros at
\[
\frac{\gamma_{n+1} \cdot (b\pm \sqrt{b^2 - 4cr_n})}{2},
\]
where
\[
r_n = \frac{\gamma_n \gamma_{n+2}}{\gamma_{n+1}^2}.
\]
Therefore, if such an operator is to be a zero-sector reducing operator, then it is necessary that $r_n<1$ for all $n$ and 
\begin{equation}\label{eq:limsupcondition}
\limsup r_n <1.
\end{equation}
This immediately rules out several types of multiplier sequences of the first kind, such as geometric sequences $\{r^k\}_{k=0}^{\infty}$, sequences interpolated by polynomials $\{p(k)\}_{k=0}^{\infty}$, and the reciprocal of the factorial $\{1/k!\}_{k=0}^{\infty}$, as candidates for zero-sector reducing operators.


\section{Operators Relating to Double Sectors}
In this section, we explore polynomials that have roots in a double sector $\pm S(\theta)$. We define a double zero-sector preserving operator $T$ to be an operator such that whenever a polynomial $p(z)$ has zeros only inside $\pm S(\theta)$ for any $\theta \in [0,\frac{\pi}{2})$, the transformed polynomial $T[p(z)]$ has zeros only inside $\pm S(\theta)$.

\begin{theorem}
There are no double zero-sector reducing operators $T \colon \R[z] \rightarrow \R[z]$ of the form
$T[z^k] = \gamma_k z^k$ for all $k$. 
\end{theorem}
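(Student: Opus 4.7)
The plan is to argue by contradiction. Suppose $T[z^k]=\gamma_k z^k$ is a double zero-sector reducing operator: for every $\theta\in(0,\pi/2)$ the operator maps polynomials with zeros in $\pm S(\theta)$ to polynomials with zeros in $\pm S(\gamma)$ for some $\gamma=\gamma(\theta)<\theta$. Any real-rooted $p$ lies in $\pm S(\theta)$ for every $\theta>0$, so $T[p]$ lies in $\pm S(\gamma(\theta))$ for every $\theta>0$; letting $\theta\to 0^+$ pins the zeros of $T[p]$ to $\R$, so $T$ preserves real-rootedness. By the P\'olya--Schur characterization, $\{\gamma_k\}_{k=0}^{\infty}$ is therefore a multiplier sequence of the first kind. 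After composing with $z\mapsto -z$ (a symmetry of $\pm S(\theta)$) if necessary, we may assume $\gamma_k\geq 0$ for all $k$, and log-concavity $\gamma_k^2\geq\gamma_{k-1}\gamma_{k+1}$, chained through the triples $(0,1,2)$, $(1,2,3)$, $(2,3,4)$, yields the key Tur\'an-type inequality $\gamma_0\gamma_4\leq\gamma_2^2$.

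Next I would test $T$ on the even family
\[
p_\theta(z) = z^4 - 2\cos(2\theta)\,z^2 + 1,\qquad \theta\in(\pi/4,\pi/2),
\]
whose zeros $\pm e^{\pm i\theta}$ lie exactly on the boundary of $\pm S(\theta)$. Because $T[p_\theta]$ is even, its zeros are $\pm\sqrt{w_\pm}$, where $w_\pm$ solve the quadratic $\gamma_4 w^2-2\gamma_2\cos(2\theta)\,w+\gamma_0=0$; Vieta gives $w_++w_-=2\gamma_2\cos(2\theta)/\gamma_4$ and $w_+w_-=\gamma_0/\gamma_4$, with discriminant $4(\gamma_2^2\cos^2(2\theta)-\gamma_0\gamma_4)$.

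The argument now splits on the two cases permitted by $\gamma_0\gamma_4\leq\gamma_2^2$. \emph{Case 1: strict inequality.} Choose $\theta$ close enough to $\pi/2$ that $\cos^2(2\theta)>\gamma_0\gamma_4/\gamma_2^2$ and $\cos(2\theta)<0$; the discriminant is then positive, so $w_\pm$ are real, and $w_++w_-<0$ together with $w_+w_->0$ force both $w_\pm$ to be negative. Hence $\pm\sqrt{w_\pm}$ lies on the imaginary axis, placing the zeros of $T[p_\theta]$ outside $\pm S(\gamma)$ for every $\gamma<\pi/2$ and contradicting the reducing hypothesis. \emph{Case 2: equality.} The discriminant becomes $-4\gamma_2^2\sin^2(2\theta)<0$ for every $\theta\in(0,\pi/2)$, and a direct simplification gives $w_\pm=(\gamma_2/\gamma_4)\,e^{\pm 2i\theta}$; thus the zeros of $T[p_\theta]$ are $\pm\sqrt{\gamma_2/\gamma_4}\,e^{\pm i\theta}$, which sit precisely on the boundary of $\pm S(\theta)$ and therefore cannot lie in $\pm S(\gamma)$ for any $\gamma<\theta$, again violating the reducing property.

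The main obstacle is locating a test family that is both rigid enough to compute with and sharp enough to expose the Tur\'an inequality forced by MS1; the even polynomials $p_\theta$ accomplish this by collapsing the zero-location problem to a single quadratic in $w=z^2$, where $\gamma_0\gamma_4\leq\gamma_2^2$ is precisely the constraint preventing $|\arg w_\pm|$ from falling strictly below $2\theta$ when $\theta$ is near $\pi/2$. A subsidiary technicality is the possibility that $\gamma_k=0$ for some $k\in\{0,2,4\}$; in each such degenerate case $T[p_\theta]$ reduces to a polynomial whose nonzero roots again lie on the imaginary axis for $\theta>\pi/4$, closing that loophole.
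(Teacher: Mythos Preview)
Your argument is essentially correct, but it is far more elaborate than necessary. The paper dispatches the theorem in three lines: take the single test polynomial $p(z)=z^4+4$, whose zeros $\pm 1\pm i$ lie on the boundary of $\pm S(\pi/4)$; then $T[p](z)=\gamma_4 z^4+4\gamma_0$, and its zeros are the fourth roots of $-4\gamma_0/\gamma_4$. Any four such roots are equally spaced by $\pi/2$ on a circle, and an elementary angular argument shows they cannot all lie in $\pm S(\alpha)$ for any $\alpha<\pi/4$. That is the whole proof---no P\'olya--Schur theory, no Tur\'an inequalities, no case split.

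Your route is genuinely different: you first force $\{\gamma_k\}$ to be a multiplier sequence of the first kind by a limiting argument on real-rooted inputs, extract the log-concavity inequality $\gamma_0\gamma_4\le\gamma_2^2$, and then analyze the family $p_\theta(z)=z^4-2\cos(2\theta)z^2+1$ via the quadratic in $w=z^2$. This works, and it is instructive in that it explains \emph{why} the obstruction is rigid: the Tur\'an constraint is exactly what prevents $|\arg w_\pm|$ from dropping below $2\theta$. But note that your family at $\theta=\pi/4$ \emph{is} the paper's example (up to scaling), since then the $z^2$ term vanishes and the whole MS1/Tur\'an machinery becomes irrelevant---only $\gamma_0$ and $\gamma_4$ survive.

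One inaccuracy to flag: your closing sentence asserts that in every degenerate case $\gamma_k=0$ for $k\in\{0,2,4\}$ the nonzero roots of $T[p_\theta]$ lie on the imaginary axis. That is false when $\gamma_2=0$ but $\gamma_0,\gamma_4>0$: then $T[p_\theta]=\gamma_4 z^4+\gamma_0$ has roots at arguments $\pm\pi/4,\pm3\pi/4$, not on the imaginary axis. This case is in fact excluded by the MS1 structure you already established (the support of a nonnegative multiplier sequence of the first kind is an interval of indices, so $\gamma_0,\gamma_4>0$ forces $\gamma_2>0$), but you should say so rather than make an incorrect geometric claim.
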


\begin{proof}
We consider the polynomial
\[
p(z) = 4+z^4.
\] 
The zeros of $p$ are $\pm 1\pm  i$, which are inside the double sector $\pm S(\frac{\pi}{4})$. If $T$ is diagonal with respect to the standard basis, then  $T[p(z)] = 4\gamma_0  + \gamma_4 z^4$. The zeros of $T[p(z)]$ are therefore the 4th roots of $-4\gamma_0/\gamma_4$, 
which cannot lie within a double sector $\pm S(\alpha)$ with $\alpha<\pi/4$. 
\end{proof}

In the introduction, we mentioned that a multiplier sequence of the second kind can be thought of as an operator on $\R[x]$ which maps the sector $S(0)$ to the double sector $\pm S(0)$. This fact is a consequence of a general theorem regarding multiplier sequences of the second kind. 

\begin{theorem} Let $T\colon\mathbb{C}[z] \to \mathbb{C}[z]$ be a linear operator, with $T[z^k] = c\gamma_kz^k$ for all $k \in \mathbb{N}$, where $c \in \mathbb{C}$ and $\gamma_k \in \mathbb{R}$ and fix a value of $\theta$ satisfying $0 < \theta< \frac{\pi}{2}$. Then $T$ sends any polynomial $p$ with zeros only in $S(\theta)$ to a polynomial whose zeros lie in $\pm S(\theta)$ if and only if $\{\gamma_k\}_{k=0}^{\infty}$ is a multiplier sequence of the second kind.
\end{theorem}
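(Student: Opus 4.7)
The forward direction is short; the backward direction is the substance of the theorem. For the forward direction, assume $T$ maps polynomials with zeros in $S(\theta)$ to polynomials with zeros in $\pm S(\theta)$. Let $p\in\R[z]$ have only positive real zeros. For each $\alpha$ with $|\alpha|\le\theta$, the polynomial $p(e^{i\alpha}z)$ has its zeros on the ray $\arg z=-\alpha$, hence in $S(\theta)$. Since $T$ is diagonal, $T$ commutes with $z\mapsto e^{i\alpha}z$, so $T[p(e^{i\alpha}z)]=(T[p])(e^{i\alpha}z)$. By hypothesis this polynomial has zeros in $\pm S(\theta)$, so any zero $w_0$ of $T[p]$ satisfies $e^{-i\alpha}w_0\in\pm S(\theta)$ for every $|\alpha|\le\theta$. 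The arc $\{e^{-i\alpha}w_0:|\alpha|\le\theta\}$ has angular extent $2\theta$, matching the width of each of the two components of $\pm S(\theta)$; it fits only when $\arg w_0\in\{0,\pi\}$. Hence $T[p]$ is real-rooted and $\{\gamma_k\}$ is a multiplier sequence of the second kind (the constant $c$ does not affect root locations).

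For the backward direction, suppose $\{\gamma_k\}$ is MS2 and let $p\in\R[z]$ have zeros in $S(\theta)$ (the $p\in\C[z]$ case reduces to this via $p(z)\overline{p(\bar z)}$ or by factoring out a complex unit). I would write $p(z)=\prod_j(z-x_j)\prod_k[(z-a_k)^2+b_k^2]$ with $x_j\ge 0$, $a_k>0$, $\arctan(b_k/a_k)\le\theta$, and introduce the continuous deformation
\[
p_s(z) = \prod_j(z-x_j)\prod_k\bigl[(z-r_k\cos(s\phi_k))^2+r_k^2\sin^2(s\phi_k)\bigr], \qquad s\in[0,1],
\]
where $r_k=\sqrt{a_k^2+b_k^2}$ and $\phi_k=\arctan(b_k/a_k)$. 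Each $p_s\in\R[z]$ has zeros in $S(\theta)$, with $p_1=p$ and $p_0$ having only non-negative real zeros. By MS2, $T[p_0]$ is real-rooted, so at $s=0$ every zero of $T[p_s]$ lies in $\pm S(0)\subset\pm S(\theta)$. The zeros of $T[p_s]$ move continuously in $s$, and the task reduces to showing they cannot exit the closed set $\pm S(\theta)$.

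The main obstacle is ruling out boundary crossings. If some zero of $T[p_{s^*}]$ were to arrive at a boundary point $w^*$ with $\arg w^*\in\{\pm\theta,\pi\pm\theta\}$, then the diagonal property of $T$ gives $T[p_{s^*}(e^{i\arg w^*}z)]=(T[p_{s^*}])(e^{i\arg w^*}z)$ with a zero at $|w^*|>0$, yet $p_{s^*}(e^{i\arg w^*}z)$ has all zeros in the closed upper (or lower) half-plane. I would then combine the Hermite--Biehler theorem quoted after Corollary~\ref{Lms2} with Theorem~\ref{JSD} to extract a contradiction: writing the rotated polynomial as $F_1+iF_2$ with $F_1,F_2\in\R[z]$, the pair $(F_1,F_2)$ has interlacing real zeros, and MS2 carries this structure forward in a way that forbids the required positive real zero. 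An alternative I would pursue in parallel is to invoke the P\'olya--Schur characterization of MS2 sequences --- namely that $\sum\gamma_k z^k/k!$ lies in the Laguerre--P\'olya class --- to approximate $T$ (uniformly on each bounded-degree subspace) by operators $T_N$ whose multiplier sequences come from polynomials with only real zeros; each $T_N$ can be analyzed directly via Theorem~\ref{JSD}, and Hurwitz's theorem transfers the conclusion to $T$.
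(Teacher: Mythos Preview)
Your two implications are labeled oppositely to the paper's, but that is cosmetic.

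For the implication ``sector-preserving $\Rightarrow$ MS2'' (your ``forward direction''), your rotation argument is correct and rests on the same mechanism the paper uses: the diagonal operator $T$ commutes with $z\mapsto e^{i\alpha}z$. The paper argues by contradiction, starting from a specific test polynomial $(z-1)^n$ whose image has a non-real zero $\zeta$, and then choosing a single rotation angle $\alpha$ that pushes $e^{i\alpha}\zeta$ out of $\pm S(\theta)$ while keeping the rotated input in $S(\theta)$. You instead argue directly that \emph{every} zero of $T[p]$ must be real whenever $p$ has positive real zeros, by letting $\alpha$ range over all of $[-\theta,\theta]$ and observing that the resulting arc of width $2\theta$ fits inside $\pm S(\theta)$ only when centered on the real axis. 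The two arguments are minor variants of one another.

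For the implication ``MS2 $\Rightarrow$ sector-preserving'' (your ``backward direction''), the paper gives no self-contained argument: it simply invokes \cite[Theorem~129]{C}. Your homotopy $p_s$ is therefore more ambitious, but as written it has a genuine gap at exactly the point you flag. Suppose $T[p_{s^*}]$ acquires a zero $w^*$ with $\arg w^*=\theta$. Your rotation shows that $T$ applied to a polynomial with all zeros in the closed lower half-plane then has a \emph{positive real} zero; but a positive real number lies on the boundary of that half-plane, so this is not a contradiction, and nothing prevents the zero from subsequently leaving $\pm S(\theta)$. The Hermite--Biehler route you sketch would require feeding the real and imaginary parts $F_1,F_2$ of the rotated polynomial into the MS2 operator, yet MS2 sequences are only guaranteed to behave well on polynomials whose zeros have a fixed sign, which $F_1,F_2$ need not satisfy. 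The Laguerre--P\'olya approximation alternative is plausible but is likewise only asserted, not carried out. For this direction you should either complete one of these arguments in full or, as the paper does, cite the known result.
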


\begin{proof}
If $\{\gamma_k\}_{k=0}^{\infty}$ is a multiplier sequence of the second kind, then by ~\cite[Theorem 129]{C}, it sends polynomials in $S(\theta)$ to polynomials with zeros in the corresponding double sector. 

To prove the converse, we will use a strategy developed by M. Chasse (c.f. the proof of ~\cite[Proposition 138]{C}). Assume, by way of contradiction, that $\{\gamma_k\}_{k=0}^{\infty}$ is not a multiplier sequence of the second kind, but $T$ map polynomials with zeros in the fixed sector $S(\theta)$ 
to polynomials with zeros in the corresponding double sector $\pm S(\theta)$. Since $\{\gamma_k\}_{k=0}^{\infty}$ is not a multiplier sequence of the second kind, we know that there exists an $n \in \mathbb{N}$ such that $T[(z+1)^n]$ has a pair of non-real complex conjugate roots (see, for example, \cite[p. 182]{RS}). Note that $T[(z-1)^n]=(-1)^nT[(z+1)^n]\Big|_{z \to -z}$. Hence $T[(z-1)^n]$ also has a pair of non-real complex-conjugate zeros, say $\zeta$ and $\bar{\zeta}$, and consequently, $e^{i \alpha}\zeta$ is a zero of $T[(e^{-i\alpha}z-1)^n]$.

If $\zeta$ lies in the first quadrant, then we can choose $\alpha$ so that $0<\alpha<\theta$ and $\theta<\alpha+\arg(\zeta)<\pi/2$ (see Figure \ref{figure: rotate1}). If $\zeta$ lies in the third quadrant, then we can choose $\alpha$ so that $0<\alpha<\theta$ and $\theta-\pi<\alpha+\arg(\zeta)<-\pi/2$ (see Figure \ref{figure: rotate2}). 
\begin{figure}[hbt!] 
\caption{$\zeta$ in the first quadrant.}
\label{figure: rotate1}
\includegraphics[width=2 in]{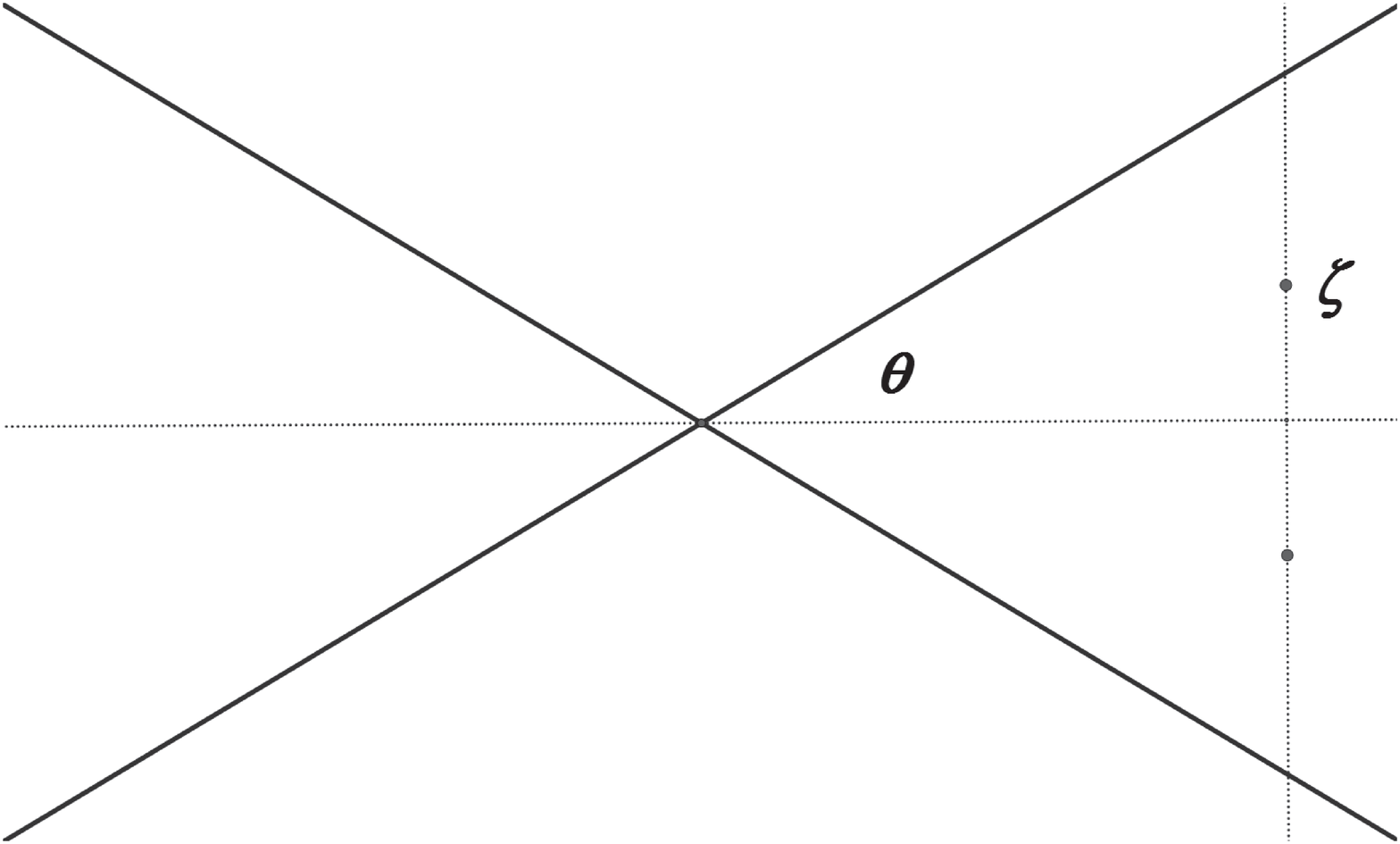} \hskip 1 in \includegraphics[width=2 in]{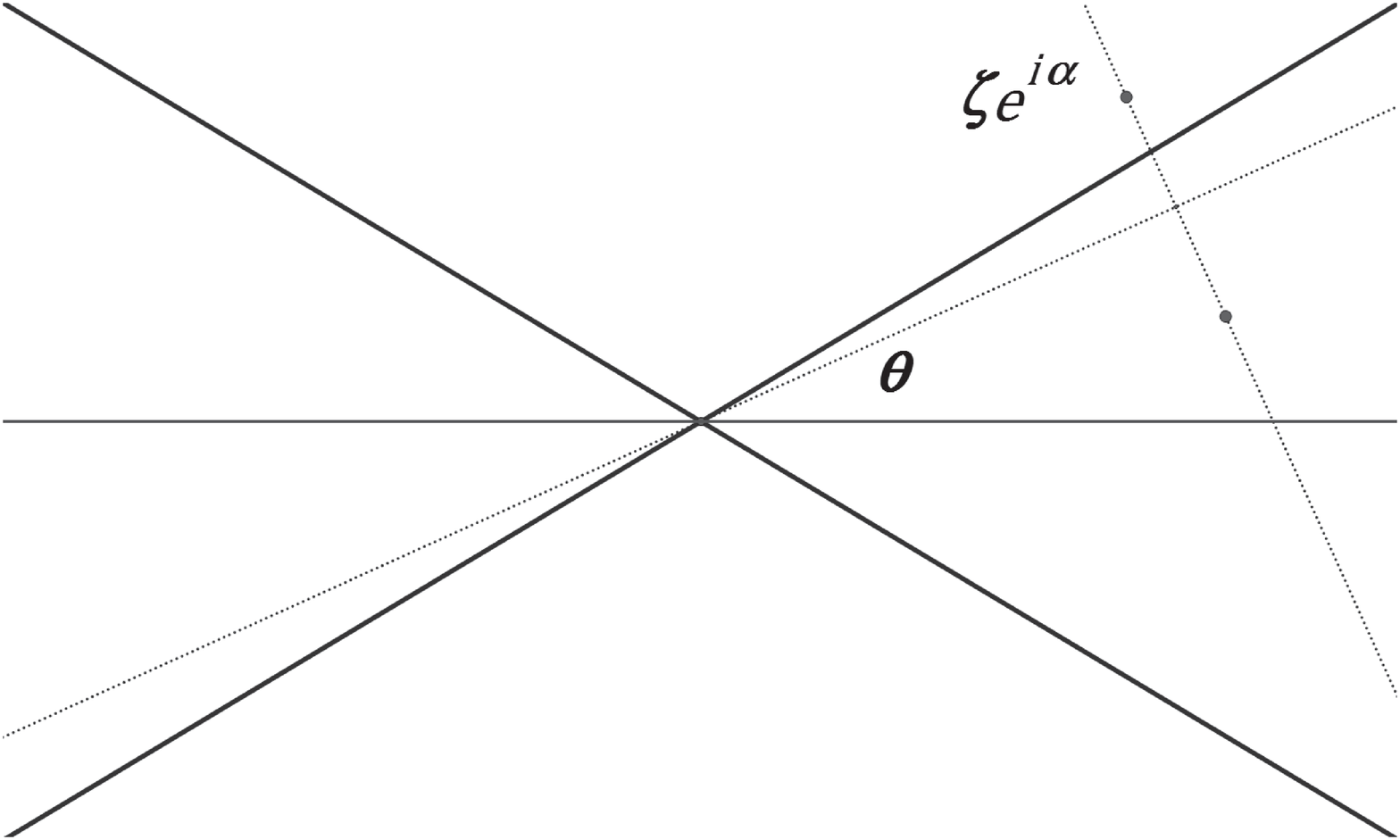}
\end{figure}
\begin{figure}[hbt!] 
\caption{$\zeta$ in the third quadrant.}
\label{figure: rotate2}
\includegraphics[width=2 in]{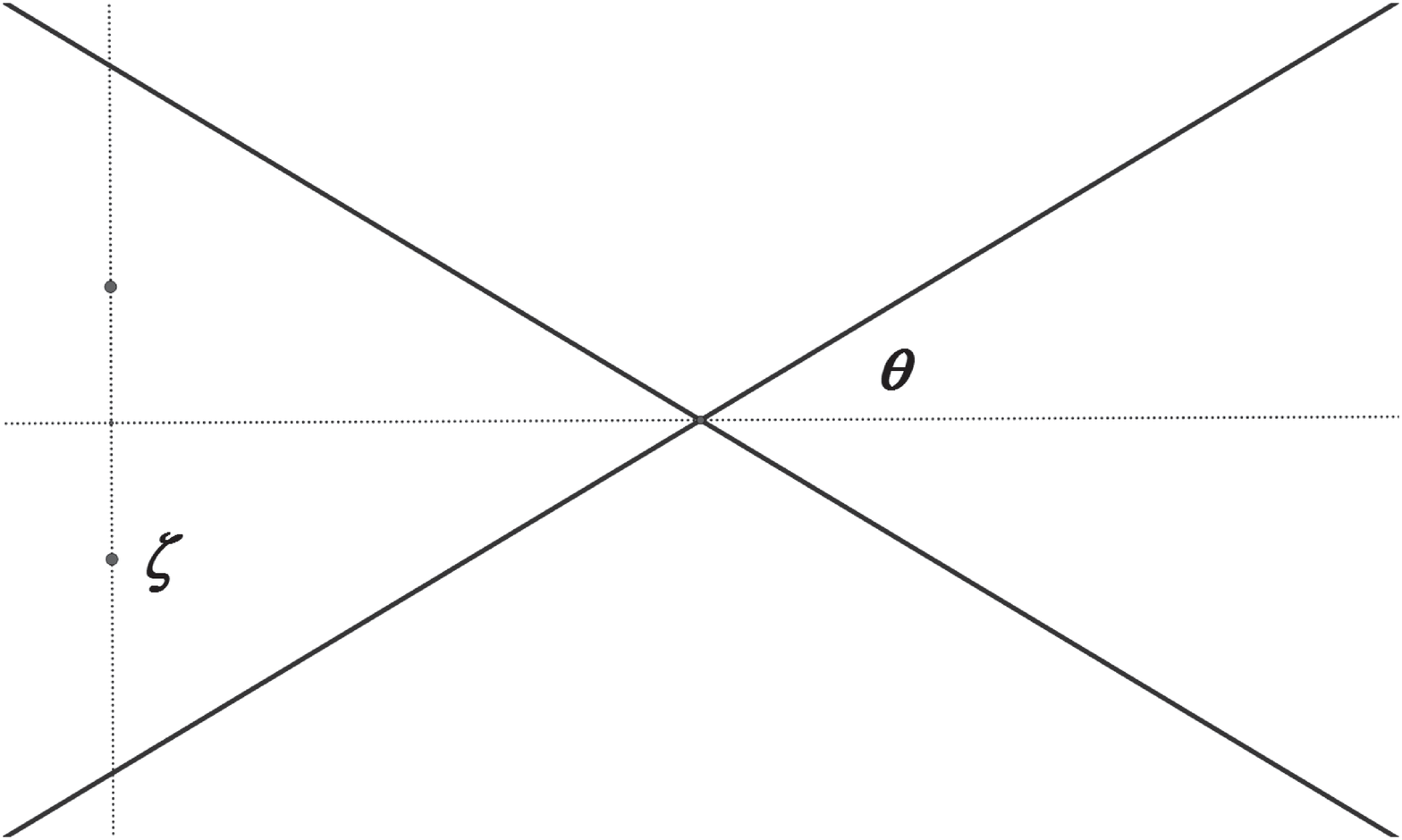} \hskip 1 in \includegraphics[width=2 in]{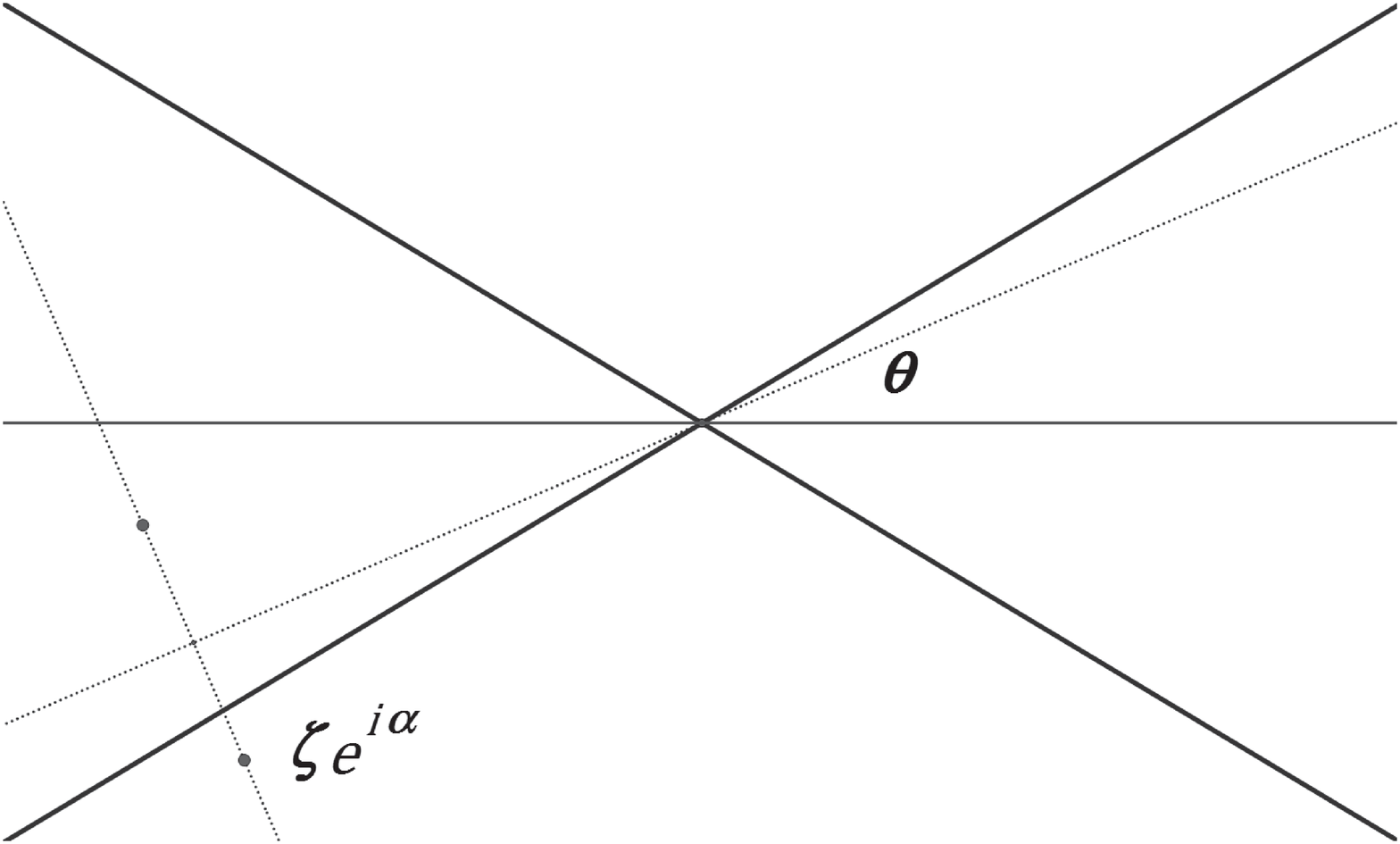}
\end{figure}
In either case, the polynomial $(e^{-i\alpha}z-1)^n$, with zeros in $S(\theta)$ is mapped to a polynomial which has a zero outside $\pm S(\theta)$, reaching a contradiction as desired.
\end{proof}

One interesting note about this theorem is that if $T$ sends polynomials with roots in one sector to polynomials with roots in its corresponding double sector, then it must be a multiplier sequence of the second kind, and therefore it sends polynomials with zeros in $S(\theta)$ to polynomials with zeros in the corresponding double sector $\pm S(\theta)$ \textit{for all choices of $\theta$}. Also worth noting is that this theorem holds without the assumption that our sector is centered about the positive real line.


\section{Open Problems}

In this final section, we detail some open problems arising from our work. Guided by the necessary conditions determined in this paper and the success with the exponential sequence, we pose the following question.
\begin{question}
Let $\alpha$ be a positive number. Is it true that the linear operator $T$ on $\R[x]$ defined by $T[z^k]=\exp(-\alpha k^p)z^k$ for all $k$ is a zero-sector reducing operator if and only if $p\geq 2$?
\end{question}
We remark that the necessary condition stipulated in equation (\ref{eq:limsupcondition}) is not satisfied when $p<2$. 

More generally, we can ask the following. 
\begin{question}
Can one determine a complete characterization of the zero-sector reducing operators on $\R[x]$?
\end{question}
We note that any composition of zero-sector reducing and zero-sector preserving operators will result in a zero-sector reducing operator. For example, the sequence $\{(1+k)\exp(k-k^2)\}$ gives rise to a zero-sector reducing operator, since it is a composition of operators corresponding to the positive multiplier sequences of the first kind $\{1+k\}$ and $\{\exp(k)\}$, and the zero-sector reducing operator $\{\exp(-k^2)\}$.

It would be interesting to obtain examples of zero-sector reducing operators which were not diagonal with respect to the standard basis and/or were not linear operators. To date, none are known.



\begin{thebibliography}{99}

\bibitem{BC} {David Bleecker \and George Csordas}, \textit{Hermite expansions and the distribution of zeros of entire functions}, Acta Sci. Math. (Szeged) \textbf{67} (2001) 177--196.

\bibitem{Cardon2015} David A. Cardon,  \textit{Complex zero strip decreasing operators},  J. Math. Anal. Appl. \textbf{426} (2015) 406--422.

\bibitem{ChaKiKim2004} Yuongjoon Cha, Haseo Ki and Young-One Kim, \textit{A note on differential operators of infinite order}, J. Math. Anal. Appl. \textbf{290} (2004) 534--541. 

\bibitem{C}{Matthew C. Chasse}: \textit{Linear Preservers and Entire Functions with Restricted Zero Loci}. Ph.D. Dissertation, University of Hawaii at Manoa. ProQuest LLC, Ann Arbor, MI, 2011.

\bibitem{CravenCzordas2004}
Thomas Craven and George Csordas, 
\textit{Composition theorems, multiplier sequences and complex zero decreasing sequences}, Value distribution theory and related topics,
Adv. Complex Anal. Appl. \textbf{8}, Kluwer Acad. Publ., Boston, MA, (2004) 131--166. 

\bibitem{J} Johan L. W. V. Jensen. \textit{Recherches sur la th\'eorie des \'equations}, Acta Math. \textbf{36} (1912-1913) 181--195.

\bibitem{L} Edmond Laguerre. \textit{\OE uvres}. Gauther-Villars, Paris, \textbf{1} (1898).

\bibitem{Levin} {Boris Ja. Levin}, \textit{Distribution of Zeros of Entire Functions}, Transl. Math. Mono. Vol. 5, Amer. Math. Soc., Providence, RI, 1964; revised ed. 1980.

\bibitem{MH} Jerrold E. Marsden, and Michael J. Hoffman, \textit{Basic Complex Analysis}, 3rd edition, fifth printing, ISBN 0-7167-2877-X, W. H. Freeman and Company, New York (1999).

\bibitem{PS} George P\'olya and Issai Schur, \textit{\"Uber zwei Arten von Faktorenfolgen in der Theorie der algebraischen Gleichungen}, J. Reine Angew. Math. \textbf{144} (1914) 89-113.

\bibitem{RS} Qazi I. Rahman and Gerhhard Schmeisser, \textit{Analytic Theory of Polynomials}, Oxford University Press (2002).
\end{thebibliography}
\end{document}